

\documentclass{amsart}

\usepackage{amssymb}
\usepackage{amsmath}
\usepackage{amsfonts}
\usepackage{graphicx}


\newtheorem{definition}{Definition}
\newtheorem{lemma}[definition]{Lemma}
\newtheorem{theorem}[definition]{Theorem}
\newtheorem{proposition}[definition]{Proposition}

\newtheorem{remark}[definition]{Remark}

\newtheorem{ep}[definition]{Example}

\usepackage[all]{xy}

\newcommand{\ZZ}{\ensuremath{\mathbb Z}}

\newcommand{\RR}{\ensuremath{\mathbb R}}


\newcommand{\cO}{\mathcal{O}}
\newcommand{\cB}{\mathcal{B}}
\newcommand{\cN}{\mathcal{N}}
\newcommand{\cL}{\mathcal{L}}
\newcommand{\cD}{\mathcal{D}}

\newcommand{\cI}{\mathcal{I}}
\newcommand{\cM}{\mathcal{M}}
\newcommand{\cC}{\mathcal{C}}
\newcommand{\cA}{\mathcal{A}}
\newcommand{\cP}{\mathcal{P}}
\newcommand{\cS}{\mathcal{S}}

\newcommand{\un}{\underline}


\begin{document}

\title{Graded geometry and Poisson reduction}

 

\author{A.S. Cattaneo}\address{{Institut f\"ur Mathematik, Universit\"at Z\"urich-Irchel, Winterthurerstr. 190, CH-8057 Z\"urich, Switzerland} }\email{alberto.cattaneo@math.uzh.ch}
\author{M. Zambon}\address{{Centre de Recerca Matematica, Apartat de correus 50, 08193
Bellaterra, Spain} }
 
\begin{abstract}
A result  of \cite{CZ} extends the Marsden-Ratiu reduction theorem \cite{MR} in Poisson geometry, and is  proven by means of graded geometry. In this  note we  provide the background material about graded geometry necessary for the proof in \cite{CZ}. Further, we provide an alternative algebraic proof for the above result. 
\end{abstract}

\maketitle

\section{Introduction}\label{intro}
Many geometric structures on an ordinary manifold may be rephrased
as the data of a Poisson-self-commuting function on an associated super symplectic manifold
(with a refinement in the grading---a \emph{graded}\/ symplectic manifold).
This is the case, e.g., of Poisson and Courant structures.

The problem of reduction of such structures may be then equivalently rephrased in the associated
super version. The most general reduction in symplectic geometry is that of presymplectic
submanifolds. In the case at hand, besides generalizing this result to the super case,
one has to find conditions for the associated function
to descend and to be still self-commuting on the quotient. In the special case of coisotropic
submanifolds the first condition implies the second, but in general this is not the case.
Some rather general sufficient, but not necessary, conditions may be worked out in a 
reduction-by-stages framework.

Once these results have been obtained, they can be translated back into the ordinary diffeogeometric
language. Namely, one gets a general reduction theory for Poisson manifolds in terms of vector
bundles on submanifolds satisfying certain conditions. Once the result is known, it can also
be proved directly without reference to supergeometry, see Section~\ref{algproof}

All this generalizes the known reduction procedures in Poisson geometry, in particular the celebrated
Marsden--Ratiu reduction \cite{MR}.

In this note we concentrate on the main results of this approach referring to \cite{CZ}
for more details and complete proofs. The classical proof of Section~\ref{algproof} is new.
The case of reduction of Courant algebroids and of
generalized complex structures will be treated in \cite{BCMZ}.

\section{Poisson manifolds}\label{gerst}
\begin{definition}\label{poism}
$M$ is a \emph{Poisson manifold}  if
$C^{\infty}(M)$ is endowed with a Lie bracket $\{\bullet,\bullet\}$ satisfying 
\begin{equation}\label{leib}
\{f,gh\}=\{f,g\}h+g\{f,h\}.
\end{equation}
\end{definition}

Let $M$ be a smooth manifold. The Lie bracket of vector fields on $M$ extends to a bracket -- called \emph{Schouten bracket} -- on 
all multivector fields on $M$. The Schouten bracket, together with the wedge product, endows the set of multivector fields $A:=\Gamma(\wedge^{\bullet} TM)$ 
  with the structure of a \emph{Gerstenhaber algebra} (also called \emph{graded Poisson algebra of degree $1$}).
This means that $A$ is a graded commutative algebra, that 
$A[1]$ (defined by $A[1]_i=A_{i+1}$) is a graded Lie algebra (see \cite{Wa}), and that the two structures are compatible in the sense that the adjoint action of a homogeneous element $X \in A$ is a graded derivation of degree $deg(X)-1$:
\begin{equation}\label{sLeibniz}
[X,Y\wedge Z]=[X,Y]\wedge Z +(-1)^{(deg(X)-1)\cdot deg(Y)}Y\wedge [X,Z].
\end{equation}
The Schouten bracket between a  vector field and a function  is $[X,f]=X(f)$, and the 
Schouten bracket of two vector fields is the usual Lie bracket. This determines the Schouten bracket on the whole of $\Gamma(\wedge^{\bullet} TM)$ by virtue of 
 \eqref{sLeibniz}. 
 
The Gerstenhaber algebra of multivector fields is relevant for us because it allows us
to describe a Poisson manifold $M$ as a manifold with a bivector field $\pi\in \Gamma(\wedge^2 TM)$ satisfying the Schouten-bracket relation $[\pi,\pi]=0$.
The connection to Def. \ref{poism} is established as follows:  the bracket $\{\bullet,\bullet\}$ is encoded by a bivector field $\pi$ due to \eqref{leib}, the correspondence being $\{f,g\}=\pi(df,dg)$. The fact that $\{\bullet,\bullet\}$  satisfies the Jacobi identity is equivalent to $[\pi,\pi]=0$.

\section{Graded manifolds and the problem}\label{gmprobl}

Ordinary manifolds are modeled on open subsets of $\RR^n$. We start describing the local model for a \emph{graded} manifold. 
\begin{definition}
Let $U\subset \RR^n$ open subset and
$V=\oplus_{i\neq 0}V_i$ a $\ZZ$-graded vector space.\\
 The \emph{local model for a graded manifold} consists of the pair
\begin{itemize}
\item $U$ (the ``body'')\\
\item $C^{\infty}(U)\otimes S^{\bullet}(V^*)$ (the graded commutative algebra of ``functions'').
\end{itemize}
\end{definition}
Notice that here $S^{\bullet}(V^*)$ denotes the \emph{graded} symmetric algebra over $V^*$, so its homogeneous elements anticommute if they both have odd degree. 
 
\begin{definition}
A graded manifold consists of a pair as follows:
\begin{itemize}
\item a topological space $M$ (the ``body'')\\
\item a sheaf $\cO_M$ over $M$ of graded commutative algebras, locally isomorphic to the
above local model (the sheaf of ``functions'').
\end{itemize}
\end{definition}
We use the notation $C(\cM)$ for $\cO_M(M)$, the space of ``functions on $\cM$''.

A   graded vector bundle
$E=\oplus_{i\neq 0}E_{i} \rightarrow M$
can be viewed as a  graded manifold with body $M$ and functions $\Gamma(S^{\bullet}E^*)$. Recall that $E[1]$ is defined by $(E[1])_i=E_{i+1}$.

\begin{ep}
$T^*[1]M $ is a 
 graded manifold with body $M$ and functions 
$$\Gamma(S^{\bullet}( T[-1]M))=\Gamma(\wedge^{\bullet} TM)=\{\text{multivector fields on }M\}.$$
Explicitly, we can choose
 coordinates $x_j$ on $M$, giving rise  coordinates $p_j$ on the  fibers of $T^*M$; assigning degree $1$ to them we obtain    coordinates 
 $\xi_j$ of fibers of $T^*[1]M$. In these coordinates, a vector field $a_i(x)\partial_{x_i}$ corresponds to the degree $1$ function  $a_i(x)\xi_i$ on $T^*[1]M$. 
\end{ep}

Exactly as usual cotangent bundles,  
$T^*[1]M$ has a symplectic form $\omega=dx_j\wedge d\xi_j$, which gives rise
 a Poisson bracket of degree $-1$ on $C(T^*[1]M)$ determined by $\{\xi_j, x_k\}=\delta_{jk}$,
$\{\xi_j, \xi_k\}=0$, $\{x_j, x_k\}=0$ and the Leibniz rule.  
But this is just the Schouten bracket on multivector fields!
Hence we see that the degree $1$ graded Poisson algebra structure on the functions on
$T^*[1]M$ coincides with the one defined in Section \ref{gerst}.
Summarizing, we obtain the following bijective correspondences (this is Prop. 4.1 of
\cite{Dima}):
\begin{proposition}\label{equivdef}
Poisson bracket $\{\bullet,\bullet\}$  on $M$ $\leftrightarrow$\\
bivector field $\pi\in \Gamma(\wedge^2TM)$ satisfying $[\pi,\pi]=0$ $\leftrightarrow$\\
degree 2 function $\cS$ on $T^*[1]M$ satisfying $\{\cS,\cS\}=0$.
\end{proposition}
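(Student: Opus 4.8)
The plan is to establish the two correspondences separately; each is already essentially prepared in the preceding sections, so the task is mainly to assemble the pieces and track signs.

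For the first arrow, suppose $\{\bullet,\bullet\}$ satisfies the Leibniz rule \eqref{leib}. For fixed $f$ the map $g\mapsto\{f,g\}$ is then a derivation of $C^\infty(M)$, hence a vector field; by the antisymmetry of the Lie bracket the same holds in the first slot, so $\{\bullet,\bullet\}$ is a biderivation and is therefore given by pairing a unique bivector field $\pi$ against the two differentials, $\{f,g\}=\pi(df,dg)$. Conversely this formula defines a bracket obeying \eqref{leib} for any $\pi$. It then remains to match the Jacobi identity with $[\pi,\pi]=0$: expanding the Schouten bracket $[\pi,\pi]$ on exact one-forms $df,dg,dh$ by means of \eqref{sLeibniz} and the rule $[X,f]=X(f)$ reproduces (up to an overall factor) the Jacobiator $\{f,\{g,h\}\}+\text{(cyclic)}$, whence the two conditions are equivalent.

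For the second arrow, I would use the identification $C(T^*[1]M)=\Gamma(\wedge^\bullet TM)$ from the Example, under which the grading records multivector degree. Thus the degree $2$ functions are exactly the bivector fields: in the coordinates $x_j,\xi_j$ such a function is $\cS=\tfrac12\pi^{jk}(x)\,\xi_j\xi_k$, corresponding to $\pi=\tfrac12\pi^{jk}\,\partial_{x_j}\wedge\partial_{x_k}$. Since, as explained above, the degree $-1$ Poisson bracket on $C(T^*[1]M)$ coincides with the Schouten bracket, one has $\{\cS,\cS\}=[\pi,\pi]$; this is a degree $3$ function, i.e.\ a trivector field, and its vanishing is precisely $[\pi,\pi]=0$.

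The step I expect to be the main obstacle is the sign-and-factor bookkeeping in the identity $\{\cS,\cS\}=[\pi,\pi]$. Because $\cS$ has shifted degree $1$, graded antisymmetry of the degree $-1$ bracket does not force $\{\cS,\cS\}$ to vanish, so this is a genuine cubic constraint rather than an automatic one. Care is needed to keep track of the Koszul signs generated when the odd generators $\xi_j$ are moved past one another while applying the Leibniz rule to $\{\cS,\cS\}$, and to check that the resulting combinatorial factor agrees with that in the coordinate expansion of $[\pi,\pi]$. Once this computation is verified, both equivalences in the Proposition follow at once.
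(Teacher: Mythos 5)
Your proposal is correct and follows essentially the same route as the paper: the Leibniz rule identifies the bracket with a bivector $\pi$ via $\{f,g\}=\pi(df,dg)$ and Jacobi with $[\pi,\pi]=0$, while the identification $C(T^*[1]M)=\Gamma(\wedge^\bullet TM)$ together with the coincidence of the degree $-1$ Poisson bracket with the Schouten bracket turns $\pi$ into a degree $2$ function $\cS$ with $\{\cS,\cS\}=[\pi,\pi]$. Your remark that $\{\cS,\cS\}$ is not forced to vanish by graded antisymmetry (since $\cS$ sits in odd degree after the shift) correctly isolates the one point where care is needed.
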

 
Hence a Poisson structure on a manifold $M$ can  equivalently be regarded as a very simple kind of structure -- indeed, just a function -- on $T^*[1]M$.\\
 
In the rest of this note we want to consider the following reduction problem: 
\begin{center}
\fbox{
\parbox[c]{12.6cm}{\begin{center}
  Let $M$ be a Poisson manifold. Specify geometric data on $M$ out of which one can construct canonically a new Poisson manifold. 
\end{center}
}}
 \end{center}


In virtue of Prop. \ref{equivdef} the problem becomes: specify geometric data on the pair $(T^*[1]M,\cS)$ which allow us to construct canonically
a new degree 1 symplectic manifold -- see Remark \ref{gradsym}; it will be again of the from $T^*[1]X$ for some manifold $X$ -- and a self-commuting degree 2 function on it.

A common way in ordinary symplectic geometry to construct new symplectic manifolds is to take  a  submanifold $N$ which is presymplectic (i.e.  $ker(\iota^*\omega)= TN^{\omega}\cap TN$   has constant rank; the special case where $TN^{\omega}\subset TN$ is called coisotropic) and to consider the quotient $N/ ker(\iota^*\omega)$, which is  automatically symplectic if smooth. 
 
This suggests to consider presymplectic submanifolds $\cC$ of $T^*[1]M$ so that  
the function $\cS$ descends to the quotient of $\cC$ by its characteristic distribution and is self-commuting there.
In the next sections we will carry this out, and in Thm \ref{A2} we will give an answer to the above reduction problem.

\begin{remark}\label{gradsym}
A \emph{degree $n$ graded symplectic manifold} is a $\ZZ_{\ge 0}$-graded manifold 
endowed with  a non-degenerate, closed 2-form whose corresponding Poisson bracket has degree $-n$. The degrees of coordinates on the graded manifold lie between $0$ and $n$
(Lemma 2.4 of \cite{Dima}). A choice of degree $n+1$ self-commuting function determines
a geometric structure by the so-called derived bracket construction. 

For $n=1$ this geometric structure is the one of a Poisson manifold (Prop. \ref{equivdef}). All degree $1$ symplectic manifolds are  of the form $T^*[1]X$ for some $X$ (Prop. 3.1 of \cite{Dima}).

 For $n=2$ the geometric structure is a so-called Courant algebroid \cite{Dima}. Some of the constructions for the case $n=2$ carried out in this note for   $n=1$ are considered in \cite{BCMZ}.
\end{remark}

\section{Graded submanifolds}
Degree $1$ graded   manifolds $\cM$ are always  of the form $W[1]$ where $W\rightarrow M$ is a vector bundle. 
 
We define graded submanifolds of $\cM$ in terms of  coordinate functions on $\cM$. To this aim recall that
functions  $x_i$ of degree zero $(i\le dim(M))$ and $\xi_j$ of degree one $(j\le rk(W))$ defined over an open subset $U\subset M$  
are called  \emph{coordinates} if 
  the $x_i$ are usual coordinates on $U$ and 
 there is an isomorphism of graded commutative algebras from $\cO_M(U)$ to the local model $C^{\infty}(U)\otimes S^{\bullet}(V^*)$ so  that under this isomorphism the $\xi_j$ correspond a basis of $V^*$. Here    $V$ is a vector space concentrated in degree $-1$ with $dim(V)=rk(W)$.

\begin{definition}\label{def:subm} A \emph{graded submanifold} $\cC$ of $\cM$ is given by a homogeneous graded
ideal $\cI \subset C(\cM):=\cO_M(M)$ satisfying the following ``smoothness'' property.
In a neighborhood $U$ of any point $x\in M$ satisfying $\cI_0(x)=0$ there exist coordinates
 $x_i$  and $\xi_j$   
so that $\cI(U)$ is generated by $x_{dim_0(\cC)+1},\cdots,x_{dim(M)}$ and 
$\xi_{dim_1(\cC)+1},\cdots,\xi_{rk(W)}$.  
Further, we require that the vanishing set of $\cI_0$ be closed in $M$.
The integers  $dim_i(\cC)$ are called the \emph{dimensions} of $\cC$ in degree $i$ ($i=0,1$).
\end{definition}
In concrete terms, we have
 $\cI_0=Z(C)$ for some closed submanifold of $M$ and 
 $\cI_1=\tilde{\Gamma}(E)$ for   some
vector subbundle $E\rightarrow C$ of $W^*\rightarrow M$.
 Here and in the sequel we use the notation $\tilde{\Gamma}(\bullet)$
to denote sections of a vector bundle   which restrict to
sections of the subbundle $\bullet$, so $\tilde{\Gamma}(E)=\{X\in
{\Gamma}(W^*):X|_C\subset E\}$. 

Since $C(\cM)/\cI$ is canonically isomorphic to $C(E^{\circ}[1])$ we write $\cC=E^{\circ}[1]$. Here $E^{\circ}\subset W|_C$ denotes the annihilator of $E$.

\section{Coisotropic submanifolds}\label{sec:coiso}

In the previous section we saw that submanifolds of $\cM:=T^*[1]M$ are of the form $E^{\circ}[1]$ for some  
vector subbundle $E\rightarrow C$ of $TM\rightarrow M$.
  Denote by $\cI$ the ideal defining $\cC$ and by
$\cN(\cI)$ the Poisson normalizer of $\cI$, i.e. the set of
functions $\phi \in C(\cM)$ satisfying $\{\phi, \cI\}\subset \cI$.
One computes\begin{equation}\label{n0}
\cN(\cI)_0=\{f\in C^{\infty}(M):df|_C\subset
 E^{\circ}\}=:C^{\infty}_E(M),
\end{equation}
\begin{equation}\label{n1}
\cN(\cI)_1=\{X\in \tilde{\Gamma}(TC):[X,\tilde{\Gamma}(E)]\subset \tilde{\Gamma}(E)\}. 
\end{equation}


\begin{definition}
The submanifold $\cC$ is \emph{coisotropic} if $\{\cI,\cI\}\subset \cI$ (i.e. $\cI\subset \cN(\cI)$).\end{definition}
 By degree reasons
$\{\cI_0,\cI_0\}$ always vanishes. If  $X\in
\cI_1=\tilde{\Gamma}(E)$  and $f\in \cI_0=Z(C)$ we have
$\{f,X\}=-X(f)$. So $\{\cI_0,\cI_1\}\subset \cI_0$ is equivalent to
$E\subset TC$. If  $X,Y\in \cI_1$ then $\{X,Y\}=[X,Y]$, so
$\{\cI_1,\cI_1\}\subset \cI_1$ is equivalent to the involutivity of
the distribution $E$ on $C$.

In this case, since by construction $\cI$ is a Poisson ideal in the Poisson algebra $\cN(\cI)$, the Poisson bracket descends making $\cN(\cI)/\cI$ into  a graded Poisson algebra.
 In degree $0$ by eq. \eqref{n0} it consists of the $E$-invariant functions on $C$, so let us assume that the quotient $\un{C}$ of $C$ by the foliation integrating $E$ be a smooth manifold (so that $C\rightarrow \un{C}$ is a submersion).
In degree $1$ by eq. \eqref{n1} $\cN(\cI)/\cI$  consists of vector fields on $C$ which are
projectable w.r.t. the projection $C \rightarrow \un{C}$, modulo
vector fields lying in the kernel of the projection. In other words
$(\cN(\cI)/\cI)_1$ is isomorphic to the space of vector fields on
$\un{C}$. We conclude that $\cN(\cI)/\cI$ is the graded Poisson
algebra on a graded symplectic manifold if{f} $\un{C}$ is smooth, and
in that case it is the Poisson algebra of functions on
$T^*[1]\un{C}$.

Further, the function $\cS$ induces a function $\un{\cS}$ on $T^*[1]\un{C}$ if{f} $\cS\in \cN(\cI)$. In that case, by the way we defined the bracket on $\cN(\cI)/\cI$, it is clear that $\un{\cS}$ commutes with itself. Hence we obtain a reduced Poisson structure on $\un{C}$.
We spell out what it means for $\cS$ to lie in $\cN(\cI)$. Since for any function $f$ on $M$ we have  $\{\cS,f\}=[\pi,f]=\sharp df$, $\{\cS,\cI_0\}\subset \cI_1$
is equivalent to $\sharp N^*C\subset E$. Here $\sharp: T^*M \rightarrow TM$
denotes contraction with the bivector $\pi$ and $N^*C:=\{\xi \in T^*M|_C: \langle \xi, TC \rangle =0\}$. Notice that in particular $C$ is a coisotropic submanifold of $M$. Further, for any vector field $X$ on $M$,
$\{\cS,X\}=[\pi,X]=-\cL_X \pi$, so
  $\{\cS,\cI_1\}\subset \cI_2$ is equivalent to $(\cL_X \pi)|_C \in \Gamma(E\wedge TM|_C)$ for any $X\in \tilde{\Gamma}(E)$, which using eq. \eqref{Xfg} below is equivalent to $C^{\infty}_E(M)$ being closed under the Poisson bracket of $M$.

We summarize:
\begin{proposition}\label{coisocase}
A coisotropic submanifold $\cC$ of $T^*[1]M$ corresponds to a
submanifold $C$ of $M$ endowed with an integrable distribution $E$.
The coisotropic quotient of $\cC$ is smooth if{f} $\un{C}=C/E$ is
smooth, and in that case the coisotropic quotient is canonically
symplectomorphic to $T^*[1]\un{C}$. The function $\cS$ on $\cM$
descends to a degree 2 self-commuting function on $T^*[1]\un{C}$
(which therefore corresponds to a Poisson structure on $\un{C}$)
if{f} $\sharp N^*C\subset E$ and $C^{\infty}_E(M)$ is closed under
the Poisson bracket.
\end{proposition}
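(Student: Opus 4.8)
The entire argument can be assembled from the bracket computations \eqref{n0} and \eqref{n1} together with the degree-by-degree analysis already carried out above; the plan is to organize it into the three assertions of the statement. First I would establish the correspondence. By Definition~\ref{def:subm} a graded submanifold $\cC$ of $\cM=T^*[1]M$ is cut out by an ideal $\cI$ with $\cI_0=Z(C)$ and $\cI_1=\tilde{\Gamma}(E)$ for a subbundle $E\to C$ of $TM$, so $\cC=E^\circ[1]$. To read off the coisotropy condition $\{\cI,\cI\}\subset\cI$ I would test it degree by degree: the pairing $\{\cI_0,\cI_0\}$ vanishes for degree reasons, the identity $\{f,X\}=-X(f)$ turns $\{\cI_0,\cI_1\}\subset\cI_0$ into the requirement $E\subset TC$, and $\{X,Y\}=[X,Y]$ turns $\{\cI_1,\cI_1\}\subset\cI_1$ into involutivity of $E$. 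This proves the first sentence: the coisotropic $\cC$ are exactly the pairs $(C,E)$ with $E\subset TC$ integrable.

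Next I would treat the quotient. Since $\cI$ is then a Poisson ideal in its normalizer, the bracket descends to the graded Poisson algebra $\cN(\cI)/\cI$, and I would identify its homogeneous pieces using \eqref{n0} and \eqref{n1}: in degree $0$ one gets the $E$-invariant functions on $C$, and in degree $1$ the $E$-projectable vector fields modulo those tangent to the leaves. When $\un{C}=C/E$ is smooth these are precisely $C^\infty(\un{C})$ and $\Gamma(T\un{C})$, so $\cN(\cI)/\cI\cong C(T^*[1]\un{C})$ as graded Poisson algebras; invoking Remark~\ref{gradsym} (every degree $1$ symplectic manifold is a shifted cotangent bundle) gives the symplectomorphism. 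For the converse I would note that the degree $0$ part of any graded symplectic quotient must be the smooth functions of its body, which forces $\un{C}$ to be smooth. I expect the degree $1$ identification---matching the descended vector fields with genuine vector fields on $\un{C}$ via the submersion $C\to\un{C}$---to be the delicate point here.

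Finally, for the function $\cS$ I would use that it descends to the quotient exactly when $\cS\in\cN(\cI)$, self-commutation of the descended $\un{\cS}$ being then automatic since the quotient bracket is induced. Spelling out $\cS\in\cN(\cI)$ degree by degree with $\{\cS,f\}=\sharp\,df$ and $\{\cS,X\}=-\cL_X\pi$ converts the two conditions into $\sharp N^*C\subset E$ (which en passant forces $C$ to be coisotropic in $M$) and $(\cL_X\pi)|_C\in\Gamma(E\wedge TM|_C)$ for all $X\in\tilde{\Gamma}(E)$. The one genuinely non-routine step, and the main obstacle I anticipate, is re-expressing this last tensorial condition as closedness of $C^\infty_E(M)$ under the Poisson bracket of $M$; I would carry this out through the Leibniz-type identity \eqref{Xfg}.
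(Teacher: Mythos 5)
Your proposal is correct and follows essentially the same route as the paper's own argument in Section~\ref{sec:coiso}: the degree-by-degree analysis of $\{\cI,\cI\}\subset\cI$ via $\{f,X\}=-X(f)$ and $\{X,Y\}=[X,Y]$, the identification of $\cN(\cI)/\cI$ with $C(T^*[1]\un{C})$ using \eqref{n0} and \eqref{n1}, and the translation of $\cS\in\cN(\cI)$ through $\{\cS,f\}=\sharp\,df$, $\{\cS,X\}=-\cL_X\pi$ and the identity \eqref{Xfg}. No substantive differences to report.
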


The Poisson-reduction result obtained from the above proposition is quite trivial.
In order to obtain more interesting results we have to allow $\cC$ to be not just a coisotropic submanifold, but actually a presymplectic
 submanifold of $T^*[1]M$.

\section{Presymplectic submanifolds}\label{sec:presympl}

We consider again a submanifold $\cC=E^{\circ}[1]$ of $\cM:=T^*[1]M$, 
 and denote by $\cI$ its vanishing ideal.
To define presymplectic submanifolds we need the following 
\begin{definition}
Let $A_{ij}$ be a matrix with entries in $C(\cC):=C(\cM)/\cI$.\\
$A$ has \emph{constant rank along $\cC$} if{f}, switching rows and adding  $C(\cC)$-multiples of a row to another row, the matrix $A$ can be  brought to the form 
$\left(\begin{smallmatrix} \star \\0\end{smallmatrix}\right)$
where the degree zero part of the  rows of $\;\star\;$ are linearly independent at every point of the body of $\cC$.
\end{definition}

\begin{definition}
A submanifold $\cC$ is \emph{presymplectic} if{f} 
$\cI$ is generated by homogeneous functions $\phi_i$ for which the matrix $\;\;\{\phi_i,\phi_j\} \text{ mod }\cI\;\;$ has constant rank along $\cC$.
\end{definition}

Translating in terms of classical geometry we obtain

\begin{lemma}\label{pres}
  $\cC$ is a graded presymplectic submanifold if{f} $TC\cap E$ is a constant rank, involutive distribution on $C$ .\\
 The quotient of $\cC$ by its characteristic distribution, defined as  $\cN(\cI)/\cN(\cI)\cap \cI$, is smooth
if{f} the quotient $\un{C}:=C/(TC\cap E)$ is smooth. In this case
it is isomorphic to $T^*[1]\un{C}$ as a graded symplectic manifold.
\end{lemma}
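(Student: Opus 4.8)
The plan is to translate the definition of a presymplectic submanifold into classical geometry by writing down an explicit set of homogeneous generators of $\cI$ and computing the bracket matrix modulo $\cI$, and then to identify the quotient $\cN(\cI)/(\cN(\cI)\cap\cI)$ degree by degree. First I would fix generators near a point of $C$: degree-zero generators $f_1,\dots,f_c$ (with $c=\mathrm{codim}\,C$) cutting out $C$, so that the $df_\alpha|_C$ span the conormal $N^*C$, and degree-one generators $\eta_1,\dots,\eta_k$ (with $k=\mathrm{rk}\,E$), that is vector fields whose restriction to $C$ is a frame of $E$; together with $\cI_0\cdot C(\cM)_1$ these generate $\cI_1=\tilde\Gamma(E)$. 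Using $\{f,f\}=0$, $\{f_\alpha,\eta_b\}=-df_\alpha(\eta_b)$ and $\{\eta_a,\eta_b\}=[\eta_a,\eta_b]$, the bracket matrix modulo $\cI$ acquires a block form with a degree-zero off-diagonal block $P_{\alpha b}=-\langle df_\alpha,\eta_b\rangle|_C$ and a degree-one block $Q_{ab}=[\eta_a,\eta_b]|_C \bmod E$ in the $\eta$-$\eta$ corner, all remaining entries vanishing mod $\cI$. The key observation is that $P$ is exactly the pairing between $N^*C$ and $E$: since $(N^*C)^{\circ}=TC$, its right kernel is $E\cap TC$ and its left kernel is $N^*C\cap E^{\circ}$.

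Second, I would carry out the graded row reduction demanded by the constant-rank definition. Splitting the $\eta$'s into a part $\eta'$ pairing nondegenerately under $P$ and a part $\eta''$ spanning the right kernel $E\cap TC$ (and the $f$'s analogously), the degree-zero rows coming from $\eta'$ and the corresponding $f'$ are linearly independent precisely when $P$ has constant rank, i.e. when $\dim(E\cap TC)$ is constant. The rows indexed by the left kernel $N^*C\cap E^{\circ}$ are identically zero, while the rows indexed by $\eta''$ have vanishing degree-zero part and only the residual degree-one entries $Q_{\eta''\eta''}$. Choosing the $\eta''$ tangent to $C$, these entries can be cleared to zero, putting the matrix in the required form $\binom{\star}{0}$, if and only if $[\eta''_a,\eta''_b]|_C\in E$, that is if and only if $E\cap TC$ is involutive. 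This establishes the first assertion.

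Third, for the quotient I would compute each degree using \eqref{n0} and \eqref{n1}. In degree zero, restriction to $C$ identifies $\cN(\cI)_0=C^\infty_E(M)$ with the functions on $C$ that are constant along $E\cap TC$, with kernel $\cN(\cI)_0\cap\cI_0$; surjectivity holds because the transverse part $E/(E\cap TC)$ injects into the normal bundle of $C$, so prescribed values of $df|_C$ on $E$ are realised by adjusting normal derivatives of an extension. Hence the degree-zero quotient is $C^\infty(\un C)$, which is a genuine function algebra exactly when $\un C=C/(E\cap TC)$ is smooth. In degree one, a vector field $X\in\cN(\cI)_1$ is tangent to $C$ and satisfies $[X,\tilde\Gamma(E)]\subset\tilde\Gamma(E)$; I would show this forces $X|_C$ to be projectable along $E\cap TC$, that the projection lands in $\Gamma(T\un C)$, and that the kernel of $X\mapsto \overline{X|_C}$ is exactly $(\cN(\cI)\cap\cI)_1$. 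Since $C(\cM)$ and $C(T^*[1]\un C)=\Gamma(\wedge^{\bullet}T\un C)$ are both generated in degrees $0$ and $1$, this degreewise identification extends to an isomorphism of graded algebras $\cN(\cI)/(\cN(\cI)\cap\cI)\cong C(T^*[1]\un C)$.

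Finally, to see the isomorphism is one of graded symplectic manifolds, I would note that $\cN(\cI)$ is a Poisson subalgebra and $\cN(\cI)\cap\cI$ a Poisson ideal in it, because $\{\cN(\cI),\cI\}\subset\cI$ and the normalizer is closed under the bracket, so the induced bracket descends; it then agrees with the canonical bracket on $T^*[1]\un C$, since both are determined by their values on degree-zero and degree-one elements, which match by construction. The main obstacle I anticipate is the degree-one identification: disentangling the two conditions ``$X$ tangent to $C$'' and ``$X$ preserves $\tilde\Gamma(E)$'' so as to recover exactly projectability along $E\cap TC$, while keeping careful track of $\tilde\Gamma$ versus $\Gamma$ and of $E$ versus $E\cap TC$, together with making the graded row reduction of the first part fully rigorous.
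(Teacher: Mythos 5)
Your overall strategy is sound and is the intended one: the paper states Lemma \ref{pres} without proof (deferring to \cite{CZ}), but writing the bracket matrix on the natural generators and identifying $\cN(\cI)/(\cN(\cI)\cap\cI)$ degree by degree via \eqref{n0} and \eqref{n1} is exactly how the coisotropic case is treated in Section \ref{sec:coiso}, and your block decomposition with the pairing $P$ between $N^*C$ and $E$ (right kernel $E\cap TC$, left kernel $N^*C\cap E^{\circ}$) is the right key observation.

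Two steps are incomplete as written. First, in the row reduction the rows indexed by $\eta''$ do \emph{not} carry ``only the residual entries $Q_{\eta''\eta''}$'': they also carry the degree-one entries $[\eta''_a,\eta'_b]\bmod E$ in the $\eta'$-columns, which do not vanish in general. These can always be cleared by adding degree-one multiples of the $f$-rows, because the block $(P_{\alpha b'})$ has full column rank and these operations leave the $\eta''$-columns untouched (there $P_{\alpha b''}=0$); only after this step does the presymplectic condition reduce to the vanishing of $Q_{\eta''\eta''}\bmod E$, i.e.\ to involutivity. For the ``only if'' direction you must also check that no admissible row operation can alter the $\eta''$-columns of the $\eta''$-rows, and (strictly) that the constant-rank property does not depend on the chosen generating set. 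Second, your closing argument for the isomorphism with $C(T^*[1]\un{C})$ proves too little: generation of the target in degrees $0$ and $1$ yields surjectivity of the algebra map, not injectivity in degrees $\geq 2$, so you either need the analogues of \eqref{n0}--\eqref{n1} in all degrees or the inclusion of $\cN(\cI)$ in the subalgebra generated by $\cN(\cI)_0$ and $\cN(\cI)_1$ plus $\cN(\cI)\cap\cI$. You should also record the degree-one surjectivity (every vector field on $\un{C}$ lifts to a vector field on $M$ tangent to $C$ and normalizing $\tilde{\Gamma}(E)$), an extension argument parallel to the degree-zero one you do give. None of this changes the architecture of the proof, which is correct.
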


Now we address the issue of when the function $\cS$ induces a
function $\un{\cS}$ on the quotient $\un{\cC}:=T^*[1]\un{C}$. $\cS$
descends if{f} its image under the map $C(\cM)\rightarrow C(\cM)/\cI$
lies in
 $\cN(\cI)/\cN(\cI)\cap \cI$, i.e. if{f} $\cS$ lies in $\cN(\cI)+ \cI$.

When $\cS$ descends,
 $\un{\cS}$ might not commute with itself. The reason is that  the Poisson bracket on
 $\cN(\cI)/\cN(\cI)\cap \cI$ is computed lifting to elements of $\cN(\cI)$ (and not to arbitrary elements of $\cN(\cI)+\cI$).
 
It is clear that if $\cS$ lies in $\cN(\cI)$ then the induced function on $\un{\cC}$ still commutes with itself. It turns out that it suffices to require that  
\begin{equation}\label{halfcond}
\{\cS,\cI_0\}\subset \cI_1
\end{equation}
 (or equivalently
$\sharp TC^{\circ}\subset E$); this conditions leads to the statement
of Prop. 5.17 of \cite{CZ} and  Prop. 4.1 of \cite{FZ}, which is a mild improvement of that of \cite{MR}. We do not state it here because in Thm. \ref{A2} we will state a yet better result.

\section{Reduction in stages and the theorem}\label{redstages}

To derive a condition weaker than \eqref{halfcond} we perform
reduction in stages, as follows. We imbed the presymplectic submanifold $\cC$ in a larger
 coisotropic  submanifold $\cA$ of $\cM$. We assume that
 the quotient
$\un{\cC}$ of $\cC$ by
 its characteristic distribution $T\cC\cap T\cC^{\omega}$ is smooth. Locally 
the quotient
 can be realized in two stages:
  first
 take the image $\bar{\cC}$ of $\cC$ under the projection
$\cA\rightarrow \bar{\cA}:=\cA/T\cA^{\omega}$; assuming that $T\cC
\cap T\cA^{\omega}$ has constant rank,  $\bar{\cC}$ is a
presymplectic submanifold. Then take the presymplectic quotient of
$\bar{\cC}$. It will be (locally) symplectomorphic to $\un{\cC}$. 
 Now assume that
\begin{eqnarray}
 \label{SdescCnice} &&\cS \text{ descends to }\un{\cC}\\
 \label{SnormAnice} &&\cS \text{ descends to a function }\bar{\cS} \text{ on
 }\bar{\cA}\\
  \label{ShalfnormCnice} &&\bar{\cS} \text{ satisfies condition } \eqref{halfcond}, \text{ i.e. } \{\bar{\cS},(\cI_{ \bar{\cC}})_0\}\subset (\cI_{ \bar{\cC}})_1.
\end{eqnarray}
Then the reasoning of the previous section implies that the function $\un{\cS}$ on
$\un{\cC}$ commutes with itself. Since we are ultimately  interested in a
quotient of $\cC$, it is clear that  
condition \eqref{SnormAnice} can be weakened. 

The geometric procedure described above is carried out in algebraic terms in \cite{CZ}. Writing $\cA=D^{\circ}[1]$ for a subbundle $D\rightarrow A$ of $TM$, we obtain
  
 \begin{theorem}\label{A2}
Let $C$ be a submanifold of the Poisson manifold $M$ and $E\subset TM|_C$ a subbundle  such that $F:=TC\cap E$ is a constant rank, involutive distribution on $C$.

Let $D|_C$ be a subbundle of $TM|_C$ with $F\subset D|_C\subset E$
and
\begin{equation}\label{etcd}
\sharp E^{\circ}\subset TC+D|_C.
\end{equation}
Let $A$ be a submanifold containing $C$ such that $TA|_C=TC+D|_C$, and
 assume that  $D|_C$ can be
extended to an integrable distribution $D$ on $A$ such that
\begin{equation}\label{Liedercon}
(\cL_{X_i}\Pi)|_C \subset E\wedge TM|_C
\end{equation}
where $\{X_i\}$ is an extension from $A$ to $M$ of a (local) frame of
sections of $D$.

Then $\un{C}:=C/F$ inherits a Poisson manifold structure.
\end{theorem}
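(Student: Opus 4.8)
The plan is to translate the statement into graded geometry and to carry out a reduction in stages, exactly along the lines of Section~\ref{redstages}. Via Prop.~\ref{equivdef} the Poisson structure on $M$ is encoded by the self-commuting degree~$2$ function $\cS$ on $\cM=T^*[1]M$, and the sought Poisson structure on $\un C=C/F$ will be produced as a self-commuting degree~$2$ function $\un{\cS}$ on $T^*[1]\un C$. First I would introduce the two graded submanifolds at play: the presymplectic submanifold $\cC=E^{\circ}[1]$, which is presymplectic by Lemma~\ref{pres} since $F=TC\cap E$ is a constant-rank involutive distribution; and the coisotropic submanifold $\cA=D^{\circ}[1]$, which is coisotropic by Prop.~\ref{coisocase} because $D$ is integrable and tangent to $A$. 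The inclusion $D|_C\subset E$ gives $E^{\circ}\subset (D|_C)^{\circ}$, hence $\cC\subset\cA$; and the transversality hypothesis $TA|_C=TC+D|_C$ guarantees that $C$ maps submersively onto an open piece $\bar C$ of $\bar A:=A/D$, so that the two-stage quotient reproduces $\un C=C/F$.

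Next I would perform the reduction in two stages. Reducing the coisotropic $\cA$ by its characteristic distribution yields, by Prop.~\ref{coisocase}, the graded symplectic manifold $\bar{\cA}=T^*[1]\bar A$; projecting $\cC$ along $\cA\to\bar{\cA}$ produces a presymplectic submanifold $\bar{\cC}$ (here one checks that $T\cC\cap T\cA^{\omega}$ has constant rank, which follows from the rank hypotheses on $F$ and $D$), whose presymplectic quotient is $T^*[1]\un C$. The heart of the proof is to show that $\cS$ survives both stages as a self-commuting function. For the first stage I would use \eqref{Liedercon}: rewritten through $\{\cS,X\}=-\cL_X\pi$ with $X$ a frame of $D$, it says $\{\cS,\cI_1\}$ lands, \emph{along $C$}, in the ideal $\tilde{\Gamma}(E\wedge TM|_C)$ attached to $\cC$. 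This is precisely the weakened version of the normalizer condition \eqref{SnormAnice}: since only a neighbourhood of $\cC$ inside $\cA$ matters, imposing it merely on $C$ and landing in $E\wedge TM|_C$ rather than in $D\wedge TM|_A$ suffices. For the second stage, \eqref{etcd}, rewritten through $\{\cS,f\}=\sharp df$, forces the Hamiltonian directions $\sharp E^{\circ}$ of the $E$-invariant functions \eqref{n0} to stay tangent to $A$; this is what makes the quotient close up, so that $\cS$ descends to $\un{\cC}$ (condition \eqref{SdescCnice}) and the residual half-condition \eqref{halfcond}/\eqref{ShalfnormCnice} on $\bar{\cC}$ is satisfied---indeed it becomes mild precisely because $\bar C$ is open in $\bar A$.

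Once these conditions are in force, the reasoning recalled after \eqref{ShalfnormCnice} shows that the induced function $\un{\cS}$ on $T^*[1]\un C$ commutes with itself, and by Prop.~\ref{equivdef} this is the same datum as a Poisson bivector on $\un C=C/F$, which is the assertion. Throughout I would assume, or argue locally, that the quotients $C/F$ and $A/D$ are smooth, as in Lemma~\ref{pres} and Prop.~\ref{coisocase}. The step I expect to be the main obstacle is the bookkeeping of the weakening of \eqref{SnormAnice}: one must verify that, although $\cS$ is only controlled along $C$ rather than on all of $\cA$, the two descents compose correctly and that no anomaly to self-commutativity of $\un{\cS}$ is created by the fact that the reduced bracket is computed lifting to $\cN(\cI)$ and not to the larger $\cN(\cI)+\cI$. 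Making this rigorous is exactly what is done algebraically in \cite{CZ}; an independent classical verification---defining $\{\bar f,\bar g\}_{\un C}$ by extending $F$-invariant functions to $f$ with $df|_C\subset E^{\circ}$ and checking independence of the extension via \eqref{etcd} and \eqref{Liedercon}---is the route of Section~\ref{algproof}.
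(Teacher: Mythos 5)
Your strategy is the graded reduction\Ndash in\Ndash stages of Section~\ref{redstages}, which is how the theorem was \emph{found}; but the paper's own proof of Theorem~\ref{A2} is the purely algebraic one of Section~\ref{algproof}: one verifies the hypotheses of Proposition~\ref{alg} for $\cP=C^{\infty}(M)$, $\cD=C^{\infty}(M)_{D|_C}$, $\cB=C^{\infty}_E(M)\cap C^{\infty}_D(M)$ and $\cI$ the vanishing ideal of $C$ --- condition \eqref{shaalg} follows from \eqref{etcd}, condition \eqref{braalg} from \eqref{Liedercon} via the identity \eqref{Xfg} together with an extension argument showing that every function in $C^{\infty}_F(C)$ extends to an element of $\cB$ --- and then identifies $\cB/(\cB\cap\cI)$ with $C^{\infty}_F(C)$. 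So your route is genuinely different from the one the paper actually writes down, and your setup of $\cC=E^{\circ}[1]\subset\cA=D^{\circ}[1]$, the identification $TC\cap D|_C=F$, and the two-stage picture are all correct.

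The problem is that, as written, the decisive step is asserted rather than proved. The entire difficulty of the theorem is that \eqref{Liedercon} is weaker than the normalizer condition \eqref{SnormAnice} in two independent ways (it is imposed only along $C$ rather than along $A$, and it lands in $E\wedge TM|_C$ rather than in $D\wedge TM|_A$), and that \eqref{etcd} is weaker than \eqref{halfcond}. Your sentence claiming that ``since only a neighbourhood of $\cC$ inside $\cA$ matters, imposing it merely on $C$ and landing in $E\wedge TM|_C$ \dots suffices'' is exactly the claim requiring an argument: enlarging the target from $D$ to $E$ is not a localization statement, and one must track which ideal one may work modulo at each stage to see that no anomaly appears in $\{\un{\cS},\un{\cS}\}$ --- precisely because, as you note, the reduced bracket is computed by lifting into the normalizer and not into normalizer plus ideal. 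You flag this as the main obstacle and defer it to \cite{CZ}, which makes the proposal an outline of the discovery heuristic of Section~\ref{redstages} rather than a proof. To close the gap without importing \cite{CZ}, the shortest path is Proposition~\ref{alg}: its hypothesis \eqref{shaalg} isolates exactly the amount of control on $\{\cB,\cI\cap\cD\}$ needed to kill the error term $\Delta\in\cI\cap\cD$ in the Jacobi identity, and that control is what \eqref{etcd} provides.
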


\section{An algebraic proof}\label{algproof}

Without the graded geometric interpretation it would have been hard to derive  Thm. \ref{A2}. Once the statement is known, however, it is easy to give
 an alternative algebraic proof. We will do so in this section. 
 
The following algebraic statement 
 (compare also to Prop. A.1 in \cite{FZ})  reduces to an obvious one 
 when $\cB=\cD$, for in that case
 $\cB\cap \cI$ a Poisson ideal in the 
 Poisson subalgebra $\cB$.

\begin{proposition}\label{alg}
Let $\cP$ be a Poisson algebra, $\cB \subset \cD$ multiplicative
subalgebras of $\cP$ and $\cI$ a multiplicative ideal of $\cP$.
Assume that
\begin{equation}\label{braalg}
 \{\cB,\cB\}\subset \cD  \cap (\cI + \cB)
\end{equation}
and
\begin{equation}\label{shaalg}
    \{\cB,\cI\cap \cD\}\subset \cI.
\end{equation}
Then there is an induced Poisson algebra structure on
$\frac{\cB}{\cB\cap \cI}$, whose bracket is determined by the
commutative diagram
\[
\xymatrix{
 \cB \times \cB \ar[d] \ar[r]^{\{\cdot,\cdot\}}& \cD  \cap (\cI + \cB)\ar[d]
\\
\frac{\cB}{\cB\cap \cI} \times \frac{\cB}{\cB\cap \cI}  \ar[r]&
 \frac{\cB}{\cB\cap \cI}\\
 }.
\]
\end{proposition}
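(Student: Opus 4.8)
The plan is to define the bracket on the quotient by the obvious formula and then verify that it is well-defined, which is the entire content of the proposition. Given classes $[b_1],[b_2]\in \frac{\cB}{\cB\cap\cI}$ with representatives $b_1,b_2\in\cB$, I would set $\{[b_1],[b_2]\}:=[\{b_1,b_2\}]$. For this to make sense I must first check that the right-hand side lands in $\frac{\cB}{\cB\cap\cI}$ at all: by hypothesis \eqref{braalg}, $\{b_1,b_2\}\in\cD\cap(\cI+\cB)$, so I can write $\{b_1,b_2\}=c+b$ with $c\in\cI$ and $b\in\cB$, and I define the class to be $[b]$. The picture in the statement is exactly the instruction to project $\{b_1,b_2\}\in\cD\cap(\cI+\cB)$ down to $\frac{\cB}{\cB\cap\cI}$ via the map that sends $c+b\mapsto[b]$.

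The first thing I would verify is that this projection $\cD\cap(\cI+\cB)\to\frac{\cB}{\cB\cap\cI}$ is itself well-defined, i.e.\ independent of the decomposition $c+b$. If $c+b=c'+b'$ with $c,c'\in\cI$ and $b,b'\in\cB$, then $b-b'=c'-c\in\cI$, and since $b-b'\in\cB$ we get $b-b'\in\cB\cap\cI$, so $[b]=[b']$. This step uses only that $\cB$ is a subalgebra and $\cI$ an ideal, and it is where condition \eqref{braalg} is really doing its work: the containment in $\cI+\cB$ guarantees a decomposition exists, and the containment in $\cD$ is what will let me invoke \eqref{shaalg} later.

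Next I would check that the induced bracket does not depend on the choice of representatives $b_1,b_2$ of the classes. By bilinearity it suffices to show that if $b_1\in\cB\cap\cI$ then $\{b_1,b_2\}$ projects to $0$ for every $b_2\in\cB$. Here is the crux: I need $\{b_1,b_2\}$, after subtracting off its $\cB$-part, to represent the zero class, i.e.\ its $\cB$-part lies in $\cI$. The natural move is to observe that $b_1\in\cI$ and $b_2\in\cB\subset\cD$, and $\{b_1,b_2\}=-\{b_2,b_1\}$ with $b_1\in\cI\cap\cB\subset\cI\cap\cD$; applying \eqref{shaalg} (with the roles read off from $\{\cB,\cI\cap\cD\}\subset\cI$) gives $\{b_1,b_2\}\in\cI$ outright, so its projection is $[0]=0$. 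I expect this to be the main obstacle to get exactly right, because one must confirm that $b_1\in\cB\cap\cI$ indeed lies in $\cI\cap\cD$ (true since $\cB\subset\cD$) so that \eqref{shaalg} genuinely applies, and that \eqref{braalg} is compatibly giving a $\cD$-valued bracket so the two hypotheses interlock.

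Once well-definedness is settled, the remaining algebraic properties are inherited for free. Bilinearity and antisymmetry descend because the bracket on $\cP$ has them and the projection is linear. The Leibniz rule descends because $\frac{\cB}{\cB\cap\cI}$ is a quotient of the multiplicative algebra $\cB$ by the ideal $\cB\cap\cI$, and the projection is an algebra homomorphism. The Jacobi identity likewise descends from the Jacobi identity in $\cP$, once well-definedness lets me compute nested brackets on representatives. I would remark only briefly on these, since the real content is the two-step verification that the formula $[\{b_1,b_2\}]$ is unambiguous, and relegate the verification of Leibniz and Jacobi to a single sentence noting that they pass to the quotient by functoriality.
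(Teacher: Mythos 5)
Your construction of the induced bracket and your two well-definedness checks are correct and agree with the paper's: the projection $\cD\cap(\cI+\cB)\to\frac{\cB}{\cB\cap\cI}$, $c+b\mapsto[b]$, is unambiguous because $\cB\cap\cI$ absorbs the ambiguity, and independence of representatives follows from \eqref{shaalg} applied to $\cB\cap\cI\subset\cI\cap\cD$. The genuine gap is your last step: the Jacobi identity does \emph{not} ``pass to the quotient by functoriality,'' and it is precisely here that the proposition has nontrivial content. The obstruction is that $\{b_1,b_2\}$ does not lie in $\cB$ but only in $\cD\cap(\cI+\cB)$. To form the iterated bracket $\{[f],\{[g],[h]\}\}$ you must first replace $\{\tilde g,\tilde h\}$ by a lift $\widetilde{\{g,h\}}\in\cB$ of its class and then bracket \emph{that} with $\tilde f$; the term $\{\tilde f,\{\tilde g,\tilde h\}\}$ occurring in the Jacobi identity of $\cP$ is a different object, and a priori nothing controls the discrepancy between the two.

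The paper's proof addresses exactly this point: the difference $\Delta=\{\tilde g,\tilde h\}-\widetilde{\{g,h\}}$ lies in $\cI\cap\bigl[\cD\cap(\cI+\cB)\bigr]=\cI\cap\cD$ (this is where the $\cD$-containment in \eqref{braalg} is essential), hence $\{\tilde f,\Delta\}\in\cI$ by \eqref{shaalg}, so $\{[f],\{[g],[h]\}\}$ equals $\{\tilde f,\{\tilde g,\tilde h\}\}$ modulo $\cI$ and the cyclic sum vanishes by the Jacobi identity in $\cP$. You even remarked that ``the containment in $\cD$ is what will let me invoke \eqref{shaalg} later,'' but the only later use you make of \eqref{shaalg} is for representative-independence, where the weaker statement $\{\cB,\cI\cap\cB\}\subset\cI$ would have sufficed; the full strength of \eqref{shaalg} on $\cI\cap\cD$ is needed for Jacobi, and without this $\Delta$-argument the proof is incomplete. (As the paper points out, when $\cB=\cD$ the statement is obvious because $\cB\cap\cI$ is then a Poisson ideal of the Poisson subalgebra $\cB$; the whole point of the proposition is the case where the bracket fails to close in $\cB$.)
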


\begin{proof}
The above diagram is well-defined because of \eqref{braalg} and $\{\cB,\cI\cap \cB\}\subset \cI$ (which holds by \eqref{shaalg}). The induced bilinear operation on $\frac{\cB}{\cB\cap \cI}$ satisfies the Leibniz rule \eqref{leib} because the Poisson bracket on $\cP$ does. To check the Jacobi identity consider $f,g,h\in \frac{\cB}{\cB\cap \cI}$ and lifts
$\tilde{f}$,$\tilde{g}$,$\tilde{h}$ and $\widetilde{\{g,h\}}$ to elements of $\cB$. Since 
$\widetilde{\{g,h\}}$ and $\{\tilde{g},\tilde{h}\}$ are  lifts of the same element, using
again \eqref{braalg} we see that their difference $\Delta$ lies in 
$\cI\cap[\cD  \cap (\cI + \cB)]=\cI\cap \cD$. Hence
$$\{f,\{g,h\}\}=\{\tilde{f},\widetilde{\{g,h\}}\}\;\; mod \;\;\cI=
\{\tilde{f}, \{\tilde{g},\tilde{h}\}\}+  \{\tilde{f},\Delta \}\;\; mod \;\;\cI$$ 
where  $\{\tilde{f},\Delta \}$ lies in $\cI$ by \eqref{shaalg}. Taking the cyclic sum over $f,g,h$ we see that the Jacobi identity on $\frac{\cB}{\cB\cap \cI}$ follows from the one on $\cP$.
\end{proof}

\begin{lemma}
Theorem \ref{A2} follows from Prop. \ref{alg} setting  $\cP=C^{\infty}(M)$,
$\cD=C^{\infty}(M)_{D|_C}$, $\cB=C^{\infty}_E(M) \cap C^{\infty}_D(M)$
 and $\cI=\{f\in C^{\infty}(M): f|_C=0\}$. Here we use the notation introduced for $C^{\infty}_E(M)$  at the beginning of  Section  \ref{sec:coiso}. 
\end{lemma}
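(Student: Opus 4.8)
The plan is to check that the data $\cP,\cB,\cD,\cI$ meet the hypotheses of Prop.~\ref{alg} and that the Poisson algebra it produces on $\frac{\cB}{\cB\cap\cI}$ is exactly $C^\infty(\un{C})$. The structural hypotheses are immediate: since $D|_C\subset E$ one has $E^\circ\subset(D|_C)^\circ$, so $C^\infty_E(M)\subset C^\infty(M)_{D|_C}=\cD$ and hence $\cB\subset\cD$; the Leibniz rule for $d$ makes $\cB,\cD$ multiplicative subalgebras and $\cI$ a multiplicative ideal. I would then identify $\frac{\cB}{\cB\cap\cI}$ with $C^\infty(\un{C})$ via restriction $f\mapsto f|_C$. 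The map lands in the $F$\ndash invariant functions on $C$, because $df|_C\in E^\circ$ forces $d(f|_C)\in F^\circ$ (here $F=TC\cap E$), and its kernel is $\cB\cap\cI$ by the definition of $\cI$. Surjectivity is local and is where the auxiliary data enter: an $F$\ndash invariant $\phi$ on $C$ is constant along $C\cap(\text{leaves of }D)$ because $F=TC\cap D|_C$, so it extends to a $D$\ndash basic function on $A$ (using integrability of $D$), and this in turn extends to $M$ with differential annihilating $E$ along $C$, the two prescriptions being compatible because $E\cap TA|_C=D|_C$.

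For \eqref{shaalg} I would first note that $f\in\cI\cap\cD$ satisfies $df|_C\in (TC)^\circ\cap(D|_C)^\circ=(TC+D|_C)^\circ=(TA|_C)^\circ$, using $TA|_C=TC+D|_C$. On the other hand $g\in\cB\subset C^\infty_E(M)$ gives $\sharp\,dg|_C\in\sharp E^\circ\subset TA|_C$ by \eqref{etcd}. Pairing a covector in $(TA|_C)^\circ$ against a vector in $TA|_C$ yields $0$, so $\{g,f\}|_C=0$, i.e.\ $\{g,f\}\in\cI$; this is \eqref{shaalg}.

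The real work is \eqref{braalg}, and here I would use the derived\ndash Leibniz identity
\[
X\{g,h\}=(\cL_X\pi)(dg,dh)+\{Xg,h\}+\{g,Xh\}
\]
with $X=X_i$ a frame of $D$. For $g,h\in\cB$: the first term vanishes on $C$ because $\cL_{X_i}\pi|_C\in E\wedge TM|_C$ by \eqref{Liedercon} while $dg|_C,dh|_C\in E^\circ$; and the two cross terms vanish on $C$ because $g,h\in C^\infty_D(M)$ makes $X_ig,X_ih$ vanish on \emph{all} of $A$, so their differentials lie in $(TA|_C)^\circ$ and pair to zero with $\sharp\,dh,\sharp\,dg\in TA|_C$ (again \eqref{etcd}). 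Hence $X_i\{g,h\}|_C=0$, that is $\{g,h\}\in\cD$. The remaining half, $\{g,h\}\in\cI+\cB$, is then free: $\{g,h\}\in\cD$ means $d\{g,h\}|_C\in(D|_C)^\circ\subset F^\circ$ (since $F\subset D|_C$), so $\{g,h\}|_C$ is $F$\ndash invariant and, by the identification of the first paragraph, lies in the image of $\cB$. With \eqref{braalg} and \eqref{shaalg} verified, Prop.~\ref{alg} puts a Poisson bracket on $\frac{\cB}{\cB\cap\cI}\cong C^\infty(\un{C})$, which is the asserted Poisson structure on $\un{C}$ and proves Thm.~\ref{A2}.

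The hard part will be \eqref{braalg}: one must see that all hypotheses of Thm.~\ref{A2} are consumed simultaneously\Ndash \eqref{Liedercon} for the $\cL\pi$\ndash term, \eqref{etcd} to get $\sharp(\text{annihilators})\subset TA|_C$, and both defining conditions of $\cB$ to kill the cross terms. The crucial observation is that membership in $C^\infty_D(M)$ forces $X_ig$ to vanish on the thickening $A$, not merely on $C$; this is exactly what pushes the cross terms into $(TA|_C)^\circ$, and it is the algebraic shadow of the reduction\ndash in\ndash stages argument of Section~\ref{redstages}. A secondary obstacle is the local surjectivity in the first paragraph, where integrability of $D$ together with $E\cap TA|_C=D|_C$ is needed to extend invariant functions from $C$ through $A$ to $M$.
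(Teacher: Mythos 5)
Your proof is correct and follows essentially the same route as the paper's: \eqref{shaalg} from \eqref{etcd}, and \eqref{braalg} split into $\{\cB,\cB\}\subset\cD$ via the identity \eqref{Xfg} and $\{\cB,\cB\}\subset\cI+\cB$ via extending $F$\ndash invariant functions on $C$ to elements of $\cB$ using $E\cap TA|_C=D|_C$. You merely spell out details the paper leaves implicit, notably that membership in $C^\infty_D(M)$ forces $X_i g$ to vanish on all of $A$ so that the cross terms pair to zero against $\sharp E^\circ\subset TA|_C$.
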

\begin{proof}
Condition \eqref{shaalg} is satisfied because of requirement \eqref{etcd}.
Now we check condition \eqref{braalg} in two steps. 

First, using \eqref{etcd}
(i.e. $\sharp E^{\circ}\subset TA|_C$), 
requirement \eqref{Liedercon} is equivalent to $\{\cB,\cB\}\subset \cD$.
To see this apply to $f,g\in \cB$ and $X\in \tilde{\Gamma}(D)$ the identity
\begin{equation}\label{Xfg} X\{f,g\}=
(\cL_X
\Pi)(df,dg)+\Pi(d(Xf),dg)+\Pi(df,d(Xg)).\end{equation}

Second, we have $ \{\cB,\cB\}\subset \cI + \cB$. Indeed the bracket of two elements of $\cB$ annihilates $D|_C$ by the above, so its restriction to $C$ annihilates $D|_C\cap TC=F$.
So it suffices to show: any function $h$ in $C^{\infty}_F(C)$ can be extended to a function in $\cB$. Using fact that locally  $C/F$ embeds naturally into $A/D$  
we can extend $h$ to a function in $C^{\infty}_D(A)$. Choosing a complement of $E\cap TA|_C=D|_C$ in $E$, we can extend to an element of $\cB$. 

Hence the assumptions of  Prop. \ref{alg} are satisfied, and therefore
$\frac{\cB}{\cB\cap \cI}$ is a Poisson algebra. It is clear that 
$\frac{\cB}{\cB\cap \cI} \subset C^{\infty}_F(C)$. Equality holds because, as shown above,
any function  in $C^{\infty}_F(C)$ can be extended to a function in $\cB$.
\end{proof}
   
\noindent\textbf{Acknowledgments: }
M.Z. thanks the organizers of the conference ``Special
metrics and supersymmetry" in Bilbao (2008) for the invitation and for local support, and C.R.M. Barcelona for partial financial support.  
A.S.C. is grateful to C.R.M. Barcelona
for hospitality.
This work has been
partially supported by SNF Grant 20-113439, by
the European Union through the FP6 Marie Curie RTN ENIGMA (contract
number MRTN-CT-2004-5652), and by the European Science Foundation
through the MISGAM program.


\end{document}